\theoremstyle{plain}
\newtheorem{thm}{Theorem}[section]
\newtheorem{cor}[thm]{Corollary}
\newtheorem{lem}[thm]{Lemma}
\newtheorem{rem}[thm]{Remark}
\newtheorem{defi}[thm]{Definition}
\def\sqr#1#2{{\vcenter{\vbox{\hrule height.#2pt
              \hbox{\vrule width.#2pt height#1pt \kern#1pt \vrule
width.#2pt}
              \hrule height.#2pt}}}}
\def\dbN{{\mathbb{N}}}
\def\dbR{{\mathbb{R}}}
\def\r{\rho}
\def\a{\alpha}
\def\l{\lambda}
\def\3n{\negthinspace \negthinspace \negthinspace }
\def\2n{\negthinspace \negthinspace }
\def\1n{\negthinspace }
\def\O{\Omega}
\def\no{\noindent}
\def\ms{\medskip}
\def\bs{\bigskip}
\def\({\Big (}
\def\){\Big )}
\def\[{\Big[}
\def\]{\Big]}
\def\be{\begin{equation}}
\def\bel{\begin{equation}\label}
\def\ee{\end{equation}}
\def\bea{\begin{eqnarray}}
\def\eea{\end{eqnarray}}
\def\bt{\begin{theorem}}
\def\et{\end{theorem}}
\def\bc{\begin{corollary}}
\def\ec{\end{corollary}}
\def\bl{\begin{lemma}}
\def\el{\end{lemma}}
\def\bp{\begin{proposition}}
\def\ep{\end{proposition}}
\def\br{\begin{remark}}
\def\er{\end{remark}}
\def\ba{\begin{array}}
\def\ea{\end{array}}
\def\bd{\begin{definition}}
\def\ed{\end{definition}}
\begin{document}

\title{ \bf A note on ``Problem of eigenvalues of stochastic Hamiltonian systems with boundary conditions"}

\author{ Guangdong Jing\thanks{ Partially supported by NSFC (No.11871308), E-mail: jingguangdong@mail.sdu.edu.cn},
\quad Penghui Wang \thanks{Partially supported by NSFC (No.11471189, No.11871308), {
E-mail:} {phwang@sdu.edu.cn}. \ms}
 \\ \\
 School of Mathematics, Shandong University,    Jinan,  250100,  China\\
}

\maketitle
\begin{abstract}
The eigenvalue problem of stochastic Hamiltonian systems with boundary conditions was studied by Peng \cite{peng} in 2000. For one-dimensional case, denoting by $\{\lambda_n\}_{n=1}^{\infty}$  all the eigenvalues of such an eigenvalue problem, Peng proved that $\lambda_n\to +\infty$. In this short note, we prove that the growth order of $\lambda_n$ is the same as $n^2$ as $n\to +\infty$. Apart from the interesting of its own, by this result, the statistic period  of solutions of FBSDEs can be estimated directly by corresponding coefficients and time duration.
\end{abstract}

\bs

\no{\bf 2000 MSC}.  34L15, 60H10

\bs

\no{\bf Key Words}. Eigenvalue problem; Stochastic Hamiltonian system, Forward-Backward Stochastic Differential Equations

\section{Introduction and main results}
Let $(\Omega,\mathscr{F}, \mathbb{F}, \mathbb{P})$
be a complete filtered probability space, on which a standard one-dimensional
Brownian motion $B=\{B_t\}_{t\ge0}$ is defined, and
$\mathbb{F}=\{\mathscr{F}_t\}_{t\geq 0}$ is the natural filtration of $B$
augmented by all the $\mathbb{P}$-null sets in $\mathscr{F}$.
Let $T>0$ be any fixed time horizon.

In \cite{peng}, Peng considered the following eigenvalue problem of stochastic Hamiltonian system with boundary conditions:
\begin{equation}  \label{evp2}
\left\{
\begin{aligned}
& \mathrm{d}x_t=[H_{21}^\l x_t+ H_{22}^\l y_t
   +H_{23}^\l z_t]\mathrm{d}t+[H_{31}^\l x_t+H_{32}^\l y_t
   +H_{33}^\l z_t]\mathrm{d}B_t,\indent t\in[0,T],                \\
& -\mathrm{d}y_t=[H_{11}^\l x_t+H_{12}^\l y_t
   +H_{13}^\l z_t]\mathrm{d}_t-z_t \mathrm{d} B_t,  \indent t\in[0,T],  \\
& x(0)=0,\indent y(T)=0,
\end{aligned}
\right.
\end{equation}
where $H^\l=H-\l \bar{H}$,
$$
  H=\begin {bmatrix}
  H_{11}&H_{12}&H_{13}\\
  H_{21}&H_{22}&H_{23}\\
  H_{31}&H_{32}&H_{33}
  \end{bmatrix},      \indent
  \bar{H}=\begin {bmatrix}
  \bar{H}_{11}&\bar{H}_{12}&\bar{H}_{13}\\
  \bar{H}_{21}&\bar{H}_{22}&\bar{H}_{23}\\
  \bar{H}_{31}&\bar{H}_{32}&\bar{H}_{33}
  \end{bmatrix},
$$
$H^\l_{ij}=H_{ij}-\l \bar{H}_{ij}$, $H_{ij}=H_{ij}^T$,
$\bar{H}_{ij}=\bar{H}_{ij}^T$, $i,j=1,2,3$, which are constant matrices.
\begin{defi}
A real number $\lambda$ is called an \emph{eigenvalue} of linear stochastic Hamiltonian system with boundary conditions \eqref{evp2} if there exists a nontrivial solution $(x,y,z)$ of \eqref{evp2}.
This  solution is called an \emph{eigenfunction} corresponding to $\lambda$.
All eigenfunctions associated with the eigenvalue $\lambda$ constitute a linear subspace of $M^2(0,T;\dbR^n)$, called the \emph{eigenfunction subspace} corresponding to $\lambda$.
\end{defi}

The above eigenvalue problem is a stochastic analogue of classical
eigenvalue problem of mechanic systems, and it is closely related
to the existence  of solutions to Forward-Backward Stochastic Differential
Equations (FBSDEs in short). Please refer to \cite{hupeng, maprotteryong, MWZZ,peng, wupeng}
and references therein for the well-posedness of FBSDEs,
among which the \emph{monotonicity condition} is an important sufficient condition for the existence and uniqueness of solutions to FBSDEs.
In particular, for linear FBSDE \eqref{evp2} with $\bar{H}=0$, the monotonicity condition  has the following concrete form:
\begin{eqnarray}\begin {bmatrix}
-H_{11}&-H_{12}&-H_{13}\\
H_{21}&H_{22}&H_{23}\\
H_{31}&H_{32}&H_{33}
\end{bmatrix}
\leqslant-\alpha I_{3n},\label{moncondition}\end{eqnarray}
where $\alpha>0$ is a constant.

For the case
\begin{equation}\label{pert-speci-H22}
\bar{H}=
\begin{bmatrix}
  0&0&0\\
  0&H_{22}&0\\
  0&0&0
\end{bmatrix},
\end{equation}
by using the blow-up time (as usual, the \emph{blow-up time} denotes the endpoints of the maximum existing interval of the solution to certain ODEs) of the solution for the related Riccati equation,
Peng proved the following
\begin{thm}\cite[Theorem 3.2]{peng} \label{thmPeng}
For \eqref{evp2} of one dimension with perturbation \eqref{pert-speci-H22}, assume that  \eqref{moncondition}
is satisfied as well as $H_{23}=-H_{33}H_{13}$.
Then there exist $\{\lambda_n\}_{n=1}^{+\infty}$, all the eigenvalues of
the problem (\ref{evp2}), such that $\lambda_n\rightarrow+\infty$ as \ $n\rightarrow +\infty$.
Moreover, the eigenfunction space corresponding to each $\lambda_n$ is of  one dimension.
\end{thm}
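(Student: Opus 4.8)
The plan is to reduce the stochastic two–point boundary value problem \eqref{evp2} to a scalar Riccati ordinary differential equation and to read off the eigenvalues from the blow-up behaviour of its solution. First I would look for a decoupling of the form $y_t = k^\lambda(t)\,x_t$. Substituting into the diffusion terms of \eqref{evp2} and matching the $\mathrm dB_t$–coefficients expresses $z_t$ algebraically as $z_t = q^\lambda(t)\,x_t$, where $q^\lambda$ is a rational function of $k^\lambda$ with denominator $1 - k^\lambda H_{33}^\lambda$; here the structural hypothesis $H_{23}=-H_{33}H_{13}$ is precisely what makes this elimination clean and keeps the resulting coefficient nonsingular, so that the reduction is genuinely to a scalar equation. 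Matching the $\mathrm dt$–coefficients then yields a Riccati equation $\dot k^\lambda = F(t,k^\lambda;\lambda)$, and the terminal condition $y(T)=0$ forces $k^\lambda(T)=0$, while the left boundary condition $x(0)=0$ is the constraint to be matched by shooting.

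The key observation is that $\lambda$ is an eigenvalue precisely when $k^\lambda$, integrated backward from $k^\lambda(T)=0$, blows up exactly at $t=0$. Indeed, if $k^\lambda$ stays finite on all of $[0,T]$, then $x$ solves the homogeneous linear equation $\mathrm dx_t = a^\lambda(t)x_t\,\mathrm dt + b^\lambda(t)x_t\,\mathrm dB_t$ with $x(0)=0$, whence $x\equiv 0$ and the solution is trivial; conversely, a blow-up of $k^\lambda$ corresponds, after passing to the reciprocal $R^\lambda=1/k^\lambda$ (which satisfies its own Riccati equation and crosses zero as a simple zero at the blow-up time), to a point where the roles of $x$ and $y$ reverse and a nontrivial $x$ becomes admissible. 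Requiring that this reversal occur exactly at $t=0$ reproduces $x(0)=0$ together with a single free scalar parameter $y(0)$, which generates the eigenfunction. The one-dimensionality of the eigenfunction subspace is then immediate: once $\lambda=\lambda_n$ is fixed, the whole triple $(x,y,z)$ is determined, through the (inverse) decoupling, by the scalar $y(0)$, so the eigenspace is spanned by one element.

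It then remains to show that the blow-up time of $k^\lambda$ coincides with $t=0$ along a sequence $\lambda_n\to+\infty$ and for no finite accumulation point. Here I would use the monotonicity condition \eqref{moncondition}, which supplies the coercivity needed both to guarantee solvability of the reduced equation and to yield a comparison principle: as $\lambda$ increases, the quadratic coefficient coming from $H_{22}^\lambda=(1-\lambda)H_{22}$ grows in magnitude, the Riccati flow oscillates faster, and its blow-up times depend monotonically on $\lambda$. A Sturm-type comparison argument then shows that below a threshold fixed by \eqref{moncondition} the backward solution exists on all of $[0,T]$, so there are no eigenvalues there, giving the lower bound; while as $\lambda$ grows the solution develops more and more blow-ups in $(0,T)$. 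The eigenvalues are exactly the values of $\lambda$ at which a blow-up time sweeps down through $0$; since these times move continuously and monotonically in $\lambda$ and the blow-up count increases without bound, the eigenvalues form a discrete sequence bounded below, hence $\lambda_n\to+\infty$.

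I expect the main obstacle to be this last step: turning the heuristic ``faster oscillation for large $\lambda$'' into a rigorous oscillation and comparison theory for the Riccati equation, including the careful continuation of the solution through its successive blow-ups by alternating between $k^\lambda$ and $R^\lambda=1/k^\lambda$, and the verification that the blow-up times are isolated and depend monotonically and continuously on $\lambda$. Controlling the $z$-elimination so that the denominator $1-k^\lambda H_{33}^\lambda$ never vanishes along the flow — which is exactly where the hypothesis $H_{23}=-H_{33}H_{13}$ together with the sign information from \eqref{moncondition} enters — is the technical heart of making the reduction and the monotone dependence simultaneously valid.
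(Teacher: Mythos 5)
Your proposal takes essentially the same route as the proof of Peng that the paper relies on: decouple via $y_t=k(t)x_t$, eliminate $z$ using $H_{23}=-H_{33}H_{13}$, characterize the $n$-th eigenvalue by the condition that the time obtained by chaining successive blow-ups backward from $k(T)=0$ sweeps through $t=0$, and use continuity and strict monotonicity of the blow-up times in $\rho=1-\lambda$ (Peng's Lemmas 6.1--6.2, recalled here as Lemmas \ref{lem-peng1}--\ref{lem-peng2}) to conclude that the eigenvalues form a discrete sequence tending to $+\infty$ with one-dimensional eigenspaces. The only cosmetic differences are that your continuation through blow-ups via $R=1/k$ is precisely Peng's dual Riccati equation \eqref{r21} obtained by Legendre transformation (in one dimension $\tilde k=1/k$ satisfies it), and where you invoke a Sturm-type comparison for the monotone dependence on $\lambda$, the constant-coefficient equations \eqref{r11} and \eqref{r21} are simply integrated in closed tangent form (as in Lemmas \ref{lem1} and \ref{lem2}), which settles your anticipated ``technical heart'' directly.
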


The existence of eigenvalues is given in the above theorem. Then it is natural and meaningful to ask that whether those eigenvalues have any relationship with the coefficients of systems and how they tend to infinity.
Towards solving these problems we have the following

\begin{thm}  \label{thm1}
Under the same assumptions in Theorem \ref{thmPeng},
$$
\lambda_n= O(n^2),  \indent  \text{as}\ \  n\to+\infty.
$$
In detail,
\begin{equation*}
\frac{\pi^2}{-2H_{11}H_{22}T^2} \le \mathop{\underline{\lim}}_{n\to+\infty} \frac{\lambda_n}{n^2}\leq \mathop{\overline{\lim}}_{n\to+\infty}  \frac{\lambda_n}{n^2}\le \frac{4\pi^2}{-H_{11}H_{22}T^2}.
\end{equation*}
\end{thm}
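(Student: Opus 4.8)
The plan is to follow Peng's reduction of (\ref{evp2}) to a scalar Riccati equation and then recast the eigenvalue problem as an oscillation (zero-counting) problem for a second-order linear ODE whose frequency is controlled by $\lambda$. First I would exploit the special structure. Under the perturbation (\ref{pert-speci-H22}) the parameter $\lambda$ enters only through $H_{22}^\lambda=(1-\lambda)H_{22}$, and in one dimension the symmetry $H_{ij}=H_{ji}$ together with the hypothesis $H_{23}=-H_{33}H_{13}$ makes the $z$-feedback collapse: writing a candidate eigenfunction in decoupled form $y_t=P_tx_t$, martingale matching forces $z_t=H_{13}y_t$ and reduces the diffusion coefficient of $x$ to $H_{13}x_t$. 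Consequently the decoupling field $P$ solves the constant-coefficient scalar Riccati equation
\[
\dot P=-H_{11}-(2H_{12}+H_{13}^2)P+\bigl[(\lambda-1)H_{22}+H_{13}^2H_{33}\bigr]P^2,\qquad P(T)=0,
\]
solved backward from $T$. By the monotonicity condition (\ref{moncondition}) one has $H_{11}>0$ and $H_{22}<0$, so $A:=-H_{11}<0$ and the leading coefficient $C(\lambda):=(\lambda-1)H_{22}+H_{13}^2H_{33}\to-\infty$ as $\lambda\to+\infty$. As in the proof of Theorem \ref{thmPeng}, $\lambda$ is an eigenvalue precisely when this backward solution blows up at the initial time $t=0$.

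Next I would linearize. Since $C(\lambda)$ is constant in $t$, the substitution $P=-\dot u/(C(\lambda)u)$ turns the Riccati equation into $\ddot u-B\dot u+A\,C(\lambda)\,u=0$ with $B=-(2H_{12}+H_{13}^2)$, so that the blow-ups of $P$ correspond exactly to the zeros of $u$, the terminal condition becomes $\dot u(T)=0$, and the eigenvalue condition becomes $u(0)=0$. Removing the first-order term via $u=e^{Bt/2}w$ (which preserves zeros) gives the normal form $w''+\tilde K(\lambda)w=0$ with
\[
\tilde K(\lambda)=A\,C(\lambda)-\tfrac14B^2=(-H_{11}H_{22})\,\lambda+O(1),\qquad -H_{11}H_{22}>0 .
\]
Thus the whole problem becomes an oscillation problem: $\lambda$ is the $n$-th eigenvalue exactly when the associated $w$ exhibits its $n$-th admissible sign change in $(0,T]$, so the number of zeros of $w$ in $(0,T)$ equals $n$ up to a bounded error independent of $n$.

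Finally I would extract the rate by Sturm comparison. For a constant frequency $\tilde K>0$ the solution behaves like $\sin(\sqrt{\tilde K}\,t)$ and has $\lfloor\sqrt{\tilde K}\,T/\pi\rfloor$ zeros in $(0,T)$. Since $\tilde K(\lambda)=(-H_{11}H_{22})\lambda+O(1)$, for all large $\lambda$ one has the crude sandwich $\tfrac14(-H_{11}H_{22})\lambda\le \tilde K(\lambda)\le 2(-H_{11}H_{22})\lambda$; comparing the oscillation of $w$ with the constant-coefficient equations $w''+\tfrac14(-H_{11}H_{22})\lambda\,w=0$ and $w''+2(-H_{11}H_{22})\lambda\,w=0$ bounds the zero count both above and below, hence brackets $\lambda_n$. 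Using Peng's fact that $\lambda_n\to+\infty$ (so only the large-$\lambda$ regime is relevant) and letting $n\to+\infty$ then yields
\[
\frac{\pi^2}{-2H_{11}H_{22}T^2}\le \liminf_{n\to+\infty}\frac{\lambda_n}{n^2}\le \limsup_{n\to+\infty}\frac{\lambda_n}{n^2}\le \frac{4\pi^2}{-H_{11}H_{22}T^2}.
\]

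The main obstacle is the rigorous correspondence and indexing rather than any single estimate: one must justify that the successive backward blow-ups of the scalar Riccati flow are in bijection with the eigenvalues $\{\lambda_n\}$ and are correctly numbered (here the one-dimensionality of the eigenspaces in Theorem \ref{thmPeng} is what lets the constructed deterministic solution account for every eigenvalue), and one must control the bounded remainder in $\tilde K(\lambda)$ and the first-order coefficient $B$ uniformly in $n$ so that the comparison survives passage to the limit. The looseness of the constants $\tfrac14$ and $2$ in the coefficient sandwich is precisely what produces the gap between the $\liminf$ and $\limsup$ constants.
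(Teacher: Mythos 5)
Your proposal is correct in substance, and it takes a genuinely different route from the paper's proof. The paper never leaves Peng's original framework: it quotes Peng's characterization that the $n$-th eigenvalue $\lambda_n=1-\rho_n$ is fixed by the alternating blow-up chain $T-n(T-t_{\rho_n})-(n-1)(T-\tilde{t}_{\rho_n})=0$ (equation \eqref{eq6}), computes the blow-up times of the Riccati equation \eqref{r11} \emph{and} of the dual Riccati equation \eqref{r21} in closed form via the $\arctan$ formulas of Lemmas \ref{lem1} and \ref{lem2}, and then bounds $\max\{T-t_{\rho_n},\,T-\tilde{t}_{\rho_n}\}$ between $T/(2n-1)$ and $T/(2n-2)$ to obtain $(n-1)\pi/T\le\sqrt{rq(\rho_n)-p^2/4}\le(2n-1)\pi/T$. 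You instead linearize the (same) Riccati equation --- your $C(\lambda)$, $B$ and $\tilde{K}(\lambda)=(-H_{11}H_{22})\lambda+O(1)$ coincide with the paper's $q(\rho)$, $p$ and $rq(\rho)-p^2/4$ --- into a constant-coefficient second-order linear ODE and count zeros by Sturm comparison. What your route buys: the linear ODE continues automatically through blow-ups, so no bookkeeping of two separate Riccati flows is needed, and, done sharply (consecutive zeros of a constant-frequency oscillator are exactly $\pi/\sqrt{\tilde{K}}$ apart), it even yields the exact limit $\lambda_n/n^2\to\pi^2/(-H_{11}H_{22}T^2)$; your deliberately crude sandwich with constants $\tfrac14$ and $2$ is what loosens this to the paper's (equally non-sharp) stated bounds. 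What the paper's route buys: the identification of the index $n$ with the number of blow-ups is immediate, since it is literally Peng's statement.

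That identification is the one step you flag but do not close, and it is the only bridge your argument still needs. Peng's characterization of $\lambda_n$ is phrased through blow-ups of two \emph{different} equations, \eqref{r11} and \eqref{r21}, whereas your zero-counting uses a single linear ODE built from the primal one alone. The bridge is the one-line computation that if $k$ solves \eqref{r11} then $\tilde{k}=1/k$ solves \eqref{r21}: hence both Riccati flows are charts of the projective flow of your $u$-equation, the zeros of $u$ are exactly Peng's odd times $t^{2j-1}_{\rho}$ (primal blow-ups, where $x$ vanishes) and the zeros of $\dot{u}$ are the even times $t^{2j-2}_{\rho}$ (dual blow-ups, where $y$ vanishes), so that ``$u$ has its $n$-th zero at $t=0$'' is precisely Peng's condition $t^{2n-1}_{\rho_n}=0$. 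With that inserted, the bijection and numbering you worry about follow by citing Peng exactly as the paper does (the one-dimensionality of the eigenspaces is not the operative mechanism); without it, the zero count of $w$ is not yet tied to the eigenvalue index $n$, and the final passage from zero counts to $\lambda_n$ is unsupported.
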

\begin{rem}
The result about order in Theorem \ref{thm1} can be considered as an analogue of the well-known result in deterministic case.
\end{rem}


In literature, the study of FBSDEs is mainly focused on the existence and uniqueness of solutions and hardly any on properties of solutions. From the point of view of eigenvalue problem of stochastic Hamiltonian system with boundary conditions, what is different is that some concrete characteristic such as statistic periodicity and stochastic oscillations of solutions of FBSDEs can be given.
Remark that by the proof of \cite[Theorem 3.2]{peng}, the serial number of eigenvalues are already related to the statistic periodicity of its corresponding eigenfunctions. But both of them are yet isolated from the coefficients of the systems.
It is worth noting that by the main result in this paper,  the statistic period of the solutions of the FBSDEs can be estimated directly by its coefficients and time duration.

\begin{cor}
Let $\lambda$ be an eigenvalue of the stochastic Hamiltonian system in Theorem \ref{thm1}, for sufficiently large $n$, if
\begin{equation*}
  \lambda<\frac{n^2 \pi^2}{-2H_{11}H_{22}T^2},\indent \left(\text{resp.} \quad \lambda>\frac{4n^2\pi^2}{-H_{11}H_{22}T^2}, \right)
\end{equation*}
the statistic period  of the associate eigenfunctions (i.e., the solutions of FBSDEs) is less (resp. greater) than $n$.
\end{cor}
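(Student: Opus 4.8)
The plan is to reduce the corollary to a short monotonicity argument, once two facts supplied by the earlier results are in place. The first is the identification of the \emph{statistic period} of an eigenfunction with the serial number of its eigenvalue in the increasing enumeration $\lambda_1<\lambda_2<\cdots$. For this I would invoke the construction behind Theorem \ref{thmPeng}: the eigenvalues are produced by counting the blow-up times of the associated Riccati equation on $[0,T]$, and it is exactly this count that is identified with the statistic periodicity of the corresponding eigenfunction, as recalled in the paragraph preceding the statement. Since each eigenspace is one-dimensional, the $\lambda_n$ are simple and strictly increasing, so every eigenvalue $\lambda$ equals $\lambda_m$ for a unique index $m$, and the statistic period of its eigenfunction is precisely $m$. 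Thus the corollary becomes the purely arithmetic assertion that $m<n$ in the first case and $m>n$ in the second.

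The second ingredient is an effective, per-index reading of Theorem \ref{thm1}. Writing $c_1=\frac{\pi^2}{-2H_{11}H_{22}T^2}$ and $c_2=\frac{4\pi^2}{-H_{11}H_{22}T^2}$, the proof of Theorem \ref{thm1} in fact establishes, for all sufficiently large $k$, the honest two-sided bounds
$$c_1\,k^2\le \lambda_k\le c_2\,k^2,$$
the $\liminf$ and $\limsup$ in the statement being only their asymptotic packaging. (This is the point I would justify with care, since $\liminf_{k}\lambda_k/k^2\ge c_1$ alone does not force $\lambda_k\ge c_1k^2$; the comparison of the variable-coefficient Riccati equation with the two constant-coefficient ones obtained by freezing its coefficient at its extreme values yields these inequalities directly for each large $k$.) I would then fix a threshold $N$ beyond which both inequalities hold, noting that $\lambda_1,\dots,\lambda_{N-1}$ is a fixed finite set, so $\lambda_{N-1}$ is a fixed constant.

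With these in hand the comparison is immediate for $n$ large enough that $n\ge N$ and $c_1n^2,\,c_2n^2>\lambda_{N-1}$. In the first case, assume $\lambda<c_1n^2$ and suppose, contrary to the claim, that the statistic period satisfies $m\ge n$. Then $m\ge n\ge N$, so the lower bound applies and gives $\lambda=\lambda_m\ge c_1m^2\ge c_1n^2$, contradicting $\lambda<c_1n^2$; hence $m<n$. In the second case, assume $\lambda>c_2n^2$ and suppose $m\le n$. If $m\ge N$, the upper bound gives $\lambda=\lambda_m\le c_2m^2\le c_2n^2<\lambda$, a contradiction; if instead $m<N$, then strict monotonicity gives $\lambda=\lambda_m\le \lambda_{N-1}<c_2n^2<\lambda$, again a contradiction. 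Hence $m>n$, which is the assertion about the statistic period.

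The only genuinely nontrivial input is the combination of the two preparatory facts: that the statistic period equals the serial number, and that the asymptotics of Theorem \ref{thm1} are realized as honest per-index bounds with the sharp constants $c_1,c_2$ for all large $k$. I expect the latter to be the main obstacle to state rigorously, since it requires reading off concrete inequalities from the blow-up/comparison estimates rather than merely their limiting ratios; once that is granted, the two-sided comparison above follows at once from the strict hypotheses on $\lambda$ and the strict monotonicity of the eigenvalues.
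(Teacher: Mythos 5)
Your proposal is correct, and it is essentially the argument the paper intends: the corollary is stated there without proof, as an immediate consequence of the proof of Theorem \ref{thm1} combined with the identification, recalled in the paragraph preceding the corollary, of the statistic period of an eigenfunction with the serial number of its eigenvalue in the increasing (simple, by Theorem \ref{thmPeng}) enumeration. You correctly flag the one real subtlety, namely that the $\liminf$/$\limsup$ statement of Theorem \ref{thm1} does not by itself yield $c_1 m^2\le\lambda_m\le c_2 m^2$ per index; but these honest bounds are in fact already displayed in the paper's proof, which shows for all sufficiently large $m$ that $\frac{(m-1)\pi}{T}\le\sqrt{rq(\rho_m)-\frac{p^2}{4}}\le\frac{(2m-1)\pi}{T}$, and since $rq(\rho_m)-\frac{p^2}{4}=(-H_{11}H_{22})\lambda_m+\mathrm{const}$ with $\frac{\pi^2}{-H_{11}H_{22}T^2}=2c_1=\frac{c_2}{4}$, the linear-in-$m$ correction terms are absorbed for large $m$ with slack to spare (the true growth constant being $2c_1$, the theorem's constants are deliberately loose by factors $2$ and $4$). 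The only inaccuracy is your parenthetical justification via comparison of a ``variable-coefficient'' Riccati equation with constant-coefficient ones obtained by freezing coefficients: the Riccati equations \eqref{r11} and \eqref{r21} already have constant coefficients ($\rho$ enters as a parameter, not a time-dependent coefficient) and are solved in closed form in Lemmas \ref{lem1} and \ref{lem2}, so no comparison argument is needed; this misdescription does not affect the validity of your threshold-and-monotonicity deduction, which is exactly the right way to conclude.
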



The rest of the paper is organized as follows.
In Section \ref{section2}, we recall some preliminary results and  give several lemmata.
The proof of Theorem \ref{thm1} is given in Section \ref{section3}.

\section{Preliminaries and several lemmata}\label{section2}
For one-dimensional case with perturbation \eqref{pert-speci-H22}, the eigenvalue problem of stochastic Hamiltonian system with boundary conditions \eqref{evp2} is rewritten as
\begin{equation}  \label{eqn1}
\left\{
\begin{aligned}
&\mathrm{d}x_t=[H_{21}x_t+(1-\l) H_{22}y_t+H_{23}z_t]\mathrm{d}t   \\
& \indent\ \ \ +[H_{31}x_t+H_{32}y_t+H_{33}z_t]\mathrm{d}B_t,    \indent t\in[0,T],       \\
&-\mathrm{d}y_t=[H_{11}x_t+H_{12}y_t
  +H_{13}z_t]\mathrm{d}t-z_t \mathrm{d}B_t,  \indent t\in[0,T],              \\
&x(0)=0,\indent y(T) =0.
\end{aligned}
\right.
\end{equation}
As given in \cite[Subsection 4.2]{peng},
through Legendre transformation,
the dual Hamiltonian $\tilde{H}$ of the original Hamiltonian $H$ corresponding
to (\ref{eqn1}) is
$$\tilde{H}=
\begin {bmatrix}
H_{33}^{-1}H_{32}^2-\rho H_{22}&H_{33}^{-1}H_{32}H_{31}-H_{21}&-H_{33}^{-1}H_{32}\\
H_{33}^{-1}H_{32}H_{31}-H_{21}&H_{33}^{-1}H_{31}^2&-H_{33}^{-1}H_{31}\\
-H_{33}^{-1}H_{32}&-H_{33}^{-1}H_{31}&H_{33}^{-1}
\end{bmatrix},$$
where $\rho={1-\lambda}$,
and the relation between solution $(x,y,z)$ of original Hamiltonian system and solution $(\tilde{x},\tilde{y},\tilde{z})$ of dual Hamiltonian system is:
\begin{equation*}
\left\{
\begin{aligned}
&x(t)=\tilde{y}(t), \indent  y(t)=\tilde{x}(t),    \\
&z(t)=-H_{33}^{-1}H_{32}\tilde{x}(t)-H_{33}^{-1}H_{31}\tilde{y}(t)
+H_{33}^{-1}\tilde{z}(t).
\end{aligned}
\right.
\end{equation*}

In Peng \cite{peng}, the idea to study the eigenvalue problem of stochastic
Hamiltonian system is to deal with the blow-up time of the following
Riccati equations with terminal conditions \cite[(6.2)]{peng}:
\begin{equation}\label{r11}
\left\{
\begin{aligned}
&\frac {\mathrm{d}k}{\mathrm{d}t}=-(2H_{21}+H_{13}^2)k
-H_{11}-(\rho H_{22}-H_{33}H_{13}^2)k^2, \indent  t\le T,  \\
& k(T)=0,
\end{aligned}
\right.
\end{equation}
and dual Riccati equations with terminal conditions \cite[(6.4)]{peng}:
\begin{equation}\label{r21}
\left\{
\begin{aligned}
&\frac{\mathrm{d}\tilde{k}}{\mathrm{d}t}
=(2H_{21}+H_{13}^2)\tilde{k}+H_{11}\tilde{k}^2
+(\rho H_{22}-H_{33}H_{13}^2),    \indent  t\le T,    \\
&\tilde{k}(T)=0.
\end{aligned}
\right.
\end{equation}

The following two lemmata from \cite{peng} are needed.
\begin{lem}\cite[Lemma 6.1]{peng}  \label{lem-peng1}
For the Riccati equation (\ref{r11}), the blow-up time $t_\rho$ is
continuous and strictly decreasing with respect to $\rho$ when\ $\rho<\rho_0$, where $\rho_0=H_{22}^{-1}H_{33}H_{13}^2$.
Besides,
\begin{equation}
\lim_{\rho\to-\infty}t_\rho=T,\indent
\lim_{\rho\to\rho_0^-}t_\rho=-\infty.
\end{equation}
\end{lem}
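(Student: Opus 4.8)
The plan is to reduce the statement to an explicit quadrature for an autonomous scalar Riccati equation. First I would extract the sign information hidden in the monotonicity condition \eqref{moncondition}: its diagonal entries force $H_{11}>0$ and $H_{22}<0$. Abbreviating $a:=2H_{21}+H_{13}^2$, $b:=H_{11}>0$ and $c(\rho):=\rho H_{22}-H_{33}H_{13}^2$, the Riccati equation \eqref{r11} reads
\begin{equation*}
\frac{\mathrm{d}k}{\mathrm{d}t}=-c(\rho)k^2-ak-b,\qquad k(T)=0 .
\end{equation*}
Since $H_{22}<0$, the affine map $\rho\mapsto c(\rho)$ is strictly decreasing with $c(\rho_0)=0$, so that $c(\rho)>0$ exactly when $\rho<\rho_0$. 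Passing to the backward time $s=T-t$ gives the autonomous initial value problem $k'(s)=c(\rho)k^2+ak+b$ with $k(0)=0$; as $b>0$ the solution starts off increasing, and---granting $c(\rho)k^2+ak+b>0$ for all $k\ge0$, a point I return to below---it remains strictly increasing and reaches $+\infty$ in finite time because the right-hand side is a quadratic with positive leading coefficient. Separating variables (legitimate precisely because the equation is autonomous) identifies the blow-up time as $t_\rho=T-s^\ast(\rho)$ with
\begin{equation*}
s^\ast(\rho)=\int_0^{\infty}\frac{\mathrm{d}k}{c(\rho)k^2+ak+b}.
\end{equation*}

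Granting the positivity, the three assertions fall out of this formula. For strict monotonicity: if $\rho_1<\rho_2<\rho_0$ then $c(\rho_1)>c(\rho_2)>0$, so for every $k>0$ the integrand at $\rho_2$ strictly exceeds that at $\rho_1$; hence $s^\ast(\rho_1)<s^\ast(\rho_2)$ and therefore $t_{\rho_1}>t_{\rho_2}$, i.e.\ $t_\rho$ is strictly decreasing. Continuity of $t_\rho$ on $(-\infty,\rho_0)$ follows from continuity of $\rho\mapsto c(\rho)$ by dominated convergence, a local integrable majorant being available because $c(\rho)$ is bounded away from $0$ on compact subsets of $(-\infty,\rho_0)$; alternatively one may appeal to the continuous dependence of an ODE blow-up time on its parameters.

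The boundary behaviour I would obtain by comparison with the explicitly solvable case $a=0$. On $k\ge0$ one has $c(\rho)k^2+b\le c(\rho)k^2+ak+b$, whence
\begin{equation*}
s^\ast(\rho)\le\int_0^{\infty}\frac{\mathrm{d}k}{c(\rho)k^2+b}=\frac{\pi}{2\sqrt{b\,c(\rho)}}\longrightarrow 0\quad(\rho\to-\infty),
\end{equation*}
because $c(\rho)\to+\infty$; thus $t_\rho\to T$. For the opposite endpoint, fix $K>0$ and let $\rho\to\rho_0^-$, so $c(\rho)\to0^+$. Monotone convergence yields
\begin{equation*}
\liminf_{\rho\to\rho_0^-}s^\ast(\rho)\ge\lim_{\rho\to\rho_0^-}\int_0^{K}\frac{\mathrm{d}k}{c(\rho)k^2+ak+b}=\int_0^{K}\frac{\mathrm{d}k}{ak+b},
\end{equation*}
and the last integral tends to $+\infty$ as $K\to+\infty$; hence $s^\ast(\rho)\to+\infty$ and $t_\rho\to-\infty$.

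The one step I expect to be the genuine obstacle is the positivity I postponed, namely that for every $\rho<\rho_0$ the quadratic $c(\rho)k^2+ak+b$ has no root in $[0,\infty)$, so that the solution truly blows up instead of relaxing to a finite equilibrium. When $c(\rho)$ is large (the regime $\rho\to-\infty$ that actually drives Theorem \ref{thm1}) this is automatic, since the discriminant $a^2-4b\,c(\rho)$ is then negative; the delicate range is $\rho$ near $\rho_0$, where $c(\rho)$ is small and positivity must be read off from the structural hypotheses---the monotonicity condition \eqref{moncondition} together with $H_{23}=-H_{33}H_{13}$. Once positivity on $[0,\infty)$ is secured for all $\rho<\rho_0$, the quadrature above makes the remaining analysis routine.
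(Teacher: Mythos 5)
Your reduction to the quadrature $s^\ast(\rho)=\int_0^{\infty}\frac{\mathrm{d}k}{c(\rho)k^2+ak+b}$ is sound, and the monotonicity, continuity and endpoint arguments are fine \emph{conditional} on the positivity you postponed --- but that postponed step is not a technicality that can be ``read off from the structural hypotheses'': under the hypotheses as reproduced in this note it can actually fail. Neither \eqref{moncondition} (which in the scalar case forces only $H_{11}>0$, $H_{22}<0$, $H_{33}<0$ and $H_{22}H_{33}\ge H_{23}^2$) nor $H_{23}=-H_{33}H_{13}$ controls the sign of $H_{21}$, so $a=2H_{21}+H_{13}^2<0$ is admissible. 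For such $a$, as soon as the discriminant $a^2-4b\,c(\rho)$ is nonnegative --- which in the notation \eqref{rln-p-q-r} happens exactly for $\rho\in[\rho_0+\rho_*,\rho_0)$, a nonempty interval since $\rho_*\le 0$ --- the quadratic $c(\rho)k^2+ak+b$ has two real roots, both positive (their product $b/c$ and sum $-a/c$ are positive), and the backward solution started at $k(0)=0$ increases monotonically toward the smaller root and exists globally: there is no finite blow-up at all, and your integral $s^\ast(\rho)$ is not defined. So the plan ``secure positivity on $[0,\infty)$ for all $\rho<\rho_0$'' cannot be carried out from the data available here; your argument proves the lemma only in the case $a\ge 0$ (where positivity is automatic) or on the restricted range $\rho<\rho_0+\rho_*$, where \eqref{nontrivial-condition-delta} makes the discriminant negative. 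This is precisely how the paper itself proceeds: it does not reprove the lemma (it is quoted from \cite{peng}, whose proof presumably rests on structure not reproduced here), and all of its own analysis --- Lemma \ref{lem1}, whose explicit arctan formula is just your quadrature evaluated in closed form --- is deliberately confined to $\rho\in(-\infty,\rho_0+\rho_*)$, discarding the at most finitely many eigenvalues with $1-\lambda_i\in[\rho_0+\rho_*,\rho_0)$, which suffices for the growth-order theorem.

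Two smaller slips in the same vein: the comparison $c(\rho)k^2+b\le c(\rho)k^2+ak+b$ on $k\ge0$ also silently assumes $a\ge0$; when $a<0$ it is false, although the conclusion $s^\ast(\rho)\to0$ as $\rho\to-\infty$ survives, e.g.\ via the substitution $k=u/\sqrt{c(\rho)}$, which gives $s^\ast(\rho)=c(\rho)^{-1/2}\int_0^{\infty}\bigl(u^2+a\,c(\rho)^{-1/2}u+b\bigr)^{-1}\mathrm{d}u\to0$. Likewise your lower bound $\int_0^K\frac{\mathrm{d}k}{ak+b}$ near $\rho_0^-$ diverges as $K\to\infty$ only for $a\ge0$; for $a<0$ the integrand even changes sign, which is another symptom of the same obstruction. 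In short: correct skeleton, honestly flagged gap, but the gap is the whole difficulty, and for $a<0$ it is unbridgeable on the full interval $(-\infty,\rho_0)$ by this route --- the repair is to restrict to $\rho<\rho_0+\rho_*$ as the paper does, or to import Peng's original proof.
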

\begin{lem} \cite[Lemma 6.2]{peng}   \label{lem-peng2}
For the dual Riccati equation (\ref{r21}), the blow-up time
$\tilde{t}_\rho$ is continuous and strictly decreasing with
respect to $\rho$ when \ $\rho<\rho_0$.
Besides,
\begin{equation}
\lim_{\rho\to-\infty}\tilde{t}_\rho=T,\indent
\lim_{\rho\to\rho_0^-}\tilde{t}_\rho=-\infty.
\end{equation}
\end{lem}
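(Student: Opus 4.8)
The plan is to exploit that \eqref{r21} is an \emph{autonomous} scalar Riccati equation, so its blow-up time can be read off the phase line of the quadratic on its right-hand side. Write $f_\rho(\xi)=H_{11}\xi^2+a\xi+c(\rho)$, where $a:=2H_{21}+H_{13}^2$ is independent of $\rho$ and $c(\rho):=\rho H_{22}-H_{33}H_{13}^2$. Testing the monotonicity condition \eqref{moncondition} on coordinate vectors forces the diagonal entries $-H_{11}\le-\alpha$ and $H_{22}\le-\alpha$, i.e. $H_{11}>0$ and $H_{22}<0$; since $c(\rho)=(\rho-\rho_0)H_{22}$ with $\rho_0=H_{22}^{-1}H_{33}H_{13}^2$, we obtain $c(\rho)>0$ exactly for $\rho<\rho_0$. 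Because $\tilde k(T)=0$ and $f_\rho(0)=c(\rho)>0$, the terminal slope is positive, so as $t$ decreases from $T$ the solution $\tilde k$ strictly decreases; first I would check that $f_\rho$ remains positive along this descending trajectory (for large $c(\rho)$ the discriminant $a^2-4H_{11}c(\rho)$ is negative, so $f_\rho>0$ everywhere), so that $\tilde k$ falls monotonically to $-\infty$ in finite backward time, defining $\tilde t_\rho<T$.

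Granting positivity of $f_\rho$ on $(-\infty,0]$, separation of variables yields the explicit blow-up time
\[
T-\tilde t_\rho=\int_{-\infty}^{0}\frac{d\xi}{H_{11}\xi^2+a\xi+c(\rho)}.
\]
From this single formula the qualitative claims follow. For monotonicity and continuity I would argue on the integral: $c(\rho)=(\rho-\rho_0)H_{22}$ is affine and strictly decreasing in $\rho$ (as $H_{22}<0$), so increasing $\rho$ strictly decreases $c(\rho)$, hence strictly increases the integrand pointwise and thus the integral; therefore $\tilde t_\rho$ is continuous and strictly decreasing on $(-\infty,\rho_0)$. Equivalently, and more robustly, one avoids the explicit formula via the comparison principle: for $\rho_1<\rho_2$ one has $f_{\rho_1}\ge f_{\rho_2}$ everywhere, so the $\rho_1$-solution descends faster and blows up later, giving $\tilde t_{\rho_1}>\tilde t_{\rho_2}$, while continuity is the standard continuous dependence of maximal existence times on parameters.

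For the two limits I would let $c(\rho)$ range over $(0,+\infty)$. As $\rho\to-\infty$ we have $c(\rho)\to+\infty$, the integrand tends to $0$ (bounded by $c(\rho)^{-1}$ on any compact set and by $2H_{11}^{-1}\xi^{-2}$ near $-\infty$), so the integral tends to $0$ and $\tilde t_\rho\to T$. As $\rho\to\rho_0^-$ we have $c(\rho)\to0^+$, the constant term vanishes, and $f_\rho(\xi)\to\xi(H_{11}\xi+a)$ acquires a zero at the upper endpoint $\xi=0$; once positivity of $f_\rho$ on $(-\infty,0)$ is secured as above, this is a non-integrable singularity, forcing $\int_{-\infty}^{0}f_\rho(\xi)^{-1}\,d\xi\to+\infty$ and hence $\tilde t_\rho\to-\infty$.

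The hard part will be the very first step: verifying that no equilibrium of $f_\rho$ obstructs the descending trajectory before it reaches $-\infty$, i.e. that the integral representation is valid for \emph{all} $\rho<\rho_0$ and not merely for $\rho$ very negative. The delicate point is the sign of $a$: when $a>0$ and $c(\rho)$ is small the quadratic $f_\rho$ can develop two negative roots, in which case the backward solution asymptotes to the larger root rather than blowing up. To resolve this I would bring in the remaining structure—the hypothesis $H_{23}=-H_{33}H_{13}$, the symmetry of $H$, and the duality $\tilde k=1/k$ relating \eqref{r21} to \eqref{r11}. Under this correspondence a "missing" backward blow-up of \eqref{r21} is matched by a genuine blow-up of the primal equation \eqref{r11}, so that Lemma \ref{lem-peng1} for $t_\rho$ transfers the monotonicity and the two limits to $\tilde t_\rho$. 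I would therefore either deduce $a\le0$ from the standing hypotheses, or carry the phase-line argument through the $\tilde k=1/k$ map applied to whichever of the two Riccati equations actually possesses the blowing-up solution.
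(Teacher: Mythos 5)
First, a point of comparison: the note you are reading does not actually prove this lemma --- it is imported verbatim from Peng \cite[Lemma 6.2]{peng}. The closest in-paper computation is Lemma \ref{lem2}, which solves \eqref{r21} by exactly your separation-of-variables/phase-line method, but \emph{only} for $\rho<\rho_0+\rho_*$, i.e.\ precisely under the discriminant condition \eqref{nontrivial-condition-delta}. On that range your argument is correct and coincides with the paper's: testing \eqref{moncondition} on coordinate vectors gives $H_{11}\ge\alpha>0$, $H_{22}\le-\alpha<0$; then $c(\rho)=H_{22}(\rho-\rho_0)>0$ iff $\rho<\rho_0$; the representation $T-\tilde t_\rho=\int_{-\infty}^{0}f_\rho(\xi)^{-1}\,\mathrm{d}\xi$ is valid; pointwise monotonicity of the integrand in $\rho$ gives strict decrease and continuity; and both limits follow as you say. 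You have also put your finger on exactly the right difficulty: on the window $[\rho_0+\rho_*,\rho_0)$ one has $a^2-4H_{11}c(\rho)\ge 0$ (note $\rho_*=a^2/(4H_{11}H_{22})\le 0$), and when $a=2H_{21}+H_{13}^2>0$ both roots of $f_\rho$ are negative, obstructing the descent.

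The gap is that neither of your two proposed repairs can close this window. (i) $a\le 0$ is \emph{not} deducible from the standing hypotheses: with $H$ symmetric, the quadratic-form condition \eqref{moncondition} constrains only $H_{11}\ge\alpha$ and the block $\left[\begin{smallmatrix}H_{22}&H_{23}\\ H_{23}&H_{33}\end{smallmatrix}\right]\le-\alpha I_2$, leaving $H_{21}=H_{12}$ completely free, and $H_{23}=-H_{33}H_{13}$ does not involve $H_{21}$ either; so $a>0$ is genuinely possible. (ii) The map $\tilde k=1/k$ does carry solutions of \eqref{r11} to solutions of \eqref{r21}, but it carries the primal branch with $k(T)=0$ blowing up at $t_\rho$ to the dual solution that \emph{vanishes at $t_\rho$} and explodes to $+\infty$ as $t\uparrow T$ --- a different branch from the terminal-value solution $\tilde k(T)=0$ --- so Lemma \ref{lem-peng1} transfers no information about $\tilde t_\rho$. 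In fact the obstruction is not an artifact of the method: for $a>0$ and $\rho\in[\rho_0+\rho_*,\rho_0)$ the backward trajectory of \eqref{r21} from $0$ decreases monotonically to the larger (negative) root of $f_\rho$ and exists globally, so there is no finite blow-up time at all, and on that window the quoted statement can only be read with $\tilde t_\rho=-\infty$ (strict monotonicity degenerating). This is exactly why the note confines itself to $\rho<\rho_0+\rho_*$, after observing that at most finitely many eigenvalues satisfy $1-\lambda_i\in[\rho_0+\rho_*,\rho_0)$, and cites Peng for the full-range lemma rather than reproving it. In short: your proof is complete and essentially identical to the paper's Lemma \ref{lem2} on $(-\infty,\rho_0+\rho_*)$ --- the only range the main theorem uses --- but the extension to all $\rho<\rho_0$ claimed in the statement cannot be obtained along the lines you sketch.
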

To simplify the notation, denote
\begin{equation}    \label{rln-p-q-r}
\left\{
\begin{aligned}
&q(\rho)=-(\rho H_{22}-H_{33}H_{13}^2),\indent  r=-H_{11},     \\
&\tilde{r}(\rho)=(\rho H_{22}-H_{33}H_{13}^2),\indent
   \tilde{q}=H_{11},                                        \\
&\tilde{p}=-p=2H_{21}+H_{13}^2,                            \\
&\rho_*=(4H_{11}H_{22})^{-1}{(2H_{21}+H_{13}^2)^2}.
\end{aligned}
\right.
\end{equation}
%
By  (\ref{moncondition}),
$$r=-H_{11}<0,\indent H_{22}<0,\indent  H_{33}<0.$$
Besides, by \cite[Page 278]{peng}, $\rho_0$ is the critical point,
which implies that all the eigenvalues of problem \eqref{eqn1} are located in $(-\infty, \rho_0)$.
Moreover, by Theorem \ref{thmPeng}, there are at most finite eigenvalues $\{\lambda_i\}_{i=1}^{m}$ of problem \eqref{eqn1}, such that  $1-\lambda_i \in [\rho_0+\rho_*, \rho_0)$.
Since we merely try to depict the growth order of $\lambda_n$ in this paper, it is reasonable for us to only check those $\rho\in(-\infty, \rho_0+\rho_*)$.
Actually, that
$\rho<\rho_0+\rho_*$
guarantees
\begin{eqnarray} \label{nontrivial-condition-delta}
1-\frac{p^2}{4rq(\rho)}=1-\frac{\tilde{p}^2}{4\tilde{r}(\rho)\tilde{q}}>0.
\end{eqnarray}

In what follows, for simplicity, we sometimes omit the $\rho$ in $q(\rho)$ and $\tilde{r}(\rho)$.


The following two lemmata depict the solutions of \eqref{r11} and \eqref{r21}, which are essential
in proving Theorem \ref{thm1}.


\begin{lem}     \label{lem1}
The blow-up time\ $t_\rho$ of solution $k$ of  \eqref{r11} satisfies
\begin{equation}  \label{b-utk1}
\sqrt{rq(\rho)- \frac{p^2}{4}}(T-t_\rho)+\arctan \frac{-p}{\sqrt{4rq(\rho)-p^2}}
=\frac{\pi}{2}.
\end{equation}
\begin{proof}
Under \eqref{nontrivial-condition-delta}, $pk+r+qk^2<0$ and $1-\frac{p^2}{4rq }>0$.
Then
\begin{equation*}
\frac{\mathrm{d}k}{\frac{p}{r}k+\frac{q}{r}k^2+1}=\frac{\mathrm{d}k}
{\left(\sqrt \frac{q}{r}k-\frac{p}{2\sqrt{rq}}\right)^2+\left(1-\frac{p^2}{4rq}\right)}
=r\mathrm{d}t.
\end{equation*}
Combined with terminal condition $k(T)=0$,
\begin{equation*}
k=-\frac{\sqrt{4rq-p^2}}{2q}\tan\left[\sqrt{rq-\frac{p^2}{4}}(T-t)
+\arctan\frac{-p}{\sqrt{4rq-p^2}}\right]-\frac{p}{2q}.
\end{equation*}
Therefore, for any fixed $\rho\ (\rho<\rho_0+\rho_*)$,
the blow-up time $t_\rho$ of $k$ satisfies
\begin{equation*}
\sqrt{rq- \frac{p^2}{4}}(T-t_\rho)+\arctan \frac{-p}{\sqrt{4rq-p^2}}
=\frac{\pi}{2}.
\end{equation*}
\end{proof}
\end{lem}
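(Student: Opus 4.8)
The plan is to solve the Riccati equation \eqref{r11} in closed form and then locate its blow-up time directly from that solution. Adopting the abbreviations in \eqref{rln-p-q-r}, I first rewrite \eqref{r11} as the autonomous scalar equation $\frac{\mathrm{d}k}{\mathrm{d}t}=qk^2+pk+r$ with terminal value $k(T)=0$, to be integrated backward from $t=T$. The decisive structural feature is supplied by \eqref{nontrivial-condition-delta}: it says $p^2<4rq$, and since the monotonicity condition \eqref{moncondition} together with the range $\rho<\rho_0+\rho_*$ forces $r<0$ and $q=q(\rho)<0$ (so that $rq>0$), the quadratic $qk^2+pk+r$ has no real root. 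Consequently it keeps the sign of its value $r<0$ at $k=0$; that is, $\frac{\mathrm{d}k}{\mathrm{d}t}<0$ throughout, so along the backward flow $k$ is strictly increasing and can leave its maximal interval only by escaping to $+\infty$.

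Next I would separate variables as $\frac{\mathrm{d}k}{qk^2+pk+r}=\mathrm{d}t$ and complete the square in the denominator. Because $p^2<4rq$ the completed square is a sum of two strictly positive terms, so the antiderivative is an arctangent; this is precisely why the final identity features $\arctan$ and $\tan$ rather than logarithms. Performing the integration (after the convenient normalisation by $r$) yields $k(t)$ explicitly in the form $k(t)=-\frac{\sqrt{4rq-p^2}}{2q}\tan\!\big[\sqrt{rq-\frac{p^2}{4}}\,(T-t)+c\big]-\frac{p}{2q}$ with an integration constant $c$ still to be determined.

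I would then fix $c$ by the terminal condition: imposing $k(T)=0$ forces $\tan c=\frac{-p}{\sqrt{4rq-p^2}}$, hence $c=\arctan\frac{-p}{\sqrt{4rq-p^2}}$, which is exactly the phase shift appearing in \eqref{b-utk1}. Finally, as $t$ decreases below $T$ the quantity $T-t$ increases from $0$, so the argument of the tangent increases monotonically starting from $c\in(-\frac{\pi}{2},\frac{\pi}{2})$; the solution $k(t)$ first diverges when this argument hits the first singularity of $\tan$, namely $\frac{\pi}{2}$. Equating the argument to $\frac{\pi}{2}$ at $t=t_\rho$ produces the asserted relation \eqref{b-utk1}.

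The computation is elementary, so the only points demanding real care are (i) the sign bookkeeping that guarantees $rq>0$ and $q<0$, making the radicals $\sqrt{rq-\frac{p^2}{4}}$ and $\sqrt{4rq-p^2}$ real and the closed form legitimate, and (ii) the justification that the endpoint of the maximal existence interval coincides with the first singularity of the tangent. I expect (ii) to be the genuine obstacle: it is not automatic that the point where the explicit formula blows up is the actual blow-up time, and the argument rests on the sign-definiteness of $qk^2+pk+r$, which makes $k$ strictly monotone and therefore identifies the endpoint of its maximal interval with the first escape to infinity.
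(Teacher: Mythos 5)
Your proposal is correct and follows essentially the same route as the paper's own proof: separate variables in the Riccati equation, complete the square (legitimate since \eqref{nontrivial-condition-delta} gives $4rq>p^2$ with $rq>0$), integrate to the explicit tangent formula, fix the phase $\arctan\frac{-p}{\sqrt{4rq-p^2}}$ from $k(T)=0$, and read off $t_\rho$ as the first time the tangent's argument reaches $\frac{\pi}{2}$. Your added sign bookkeeping ($q<0$, $r<0$, so $qk^2+pk+r$ is negative definite and $k$ is monotone along the backward flow) is a sound justification of the identification of the formula's singularity with the blow-up time, which the paper leaves implicit.
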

\begin{lem}    \label{lem2}
The blow-up time $\tilde{t}_\rho$ of solution $\tilde{k}$ of   (\ref{r21})  satisfies
\begin{equation}   \label{b-utk2}
\sqrt{\tilde{q}\tilde{r}(\rho)-\frac{\tilde{p}^2}{4}}(\tilde{t}_\rho-T)
+\arctan{\frac{\tilde{p}}{\sqrt{4\tilde{q}\tilde{r}(\rho)-\tilde{p}^2}}}
=-\frac{\pi}{2}.
\end{equation}
\begin{proof}
Under \eqref{nontrivial-condition-delta},
$\tilde{r}+\tilde{p}\tilde{k}+\tilde{q}\tilde{k}^2 >0$
and $1-\frac{\tilde{p}^2}{4\tilde{r} \tilde{q}}>0$.
Then
\begin{equation*}
\frac{\mathrm{d}\tilde{k}}{1+\frac{\tilde{p}}{\tilde{r}}\tilde{k}
+\frac{\tilde{q}}{\tilde{r}}\tilde{k}^2}
=\frac{\mathrm{d}\tilde{k}}{\left(\sqrt{\frac{\tilde{q}}{\tilde{r}}}\tilde{k}
+\frac{\tilde{p}}{2\sqrt{\tilde{q}\tilde{r}}}\right)^2
+\left(1-\frac{\tilde{p}^2}{4\tilde{q}\tilde{r}}\right)}=\tilde{r}\mathrm{d}t.
\end{equation*}
Combined with terminal condition $\tilde{k}(T)=0$,
\begin{equation*}
\tilde{k}=\frac{\sqrt{4\tilde{q}\tilde{r}-\tilde{p}^2}}
{2\tilde{q}}\tan\left[\frac{\sqrt{4\tilde{q}\tilde{r}-\tilde{p}^2}}{2}(t-T)+
\arctan{\frac{\tilde{p}}{\sqrt{4\tilde{q}\tilde{r}-\tilde{p}^2}}}\right]
-\frac{\tilde{p}}{2\tilde{q}}.
\end{equation*}
For any fixed $\rho\ (\rho<\rho_0+\r_*)$,
the blow-up time $\tilde{t}_\rho$ of solution $\tilde{k}$
satisfies
\begin{equation*}
\sqrt{\tilde{q}\tilde{r}-\frac{\tilde{p}^2}{4}}(\tilde{t}_\rho-T)
+\arctan{\frac{\tilde{p}}{\sqrt{4\tilde{q}\tilde{r}-\tilde{p}^2}}}
=-\frac{\pi}{2}.
\end{equation*}
\end{proof}
\end{lem}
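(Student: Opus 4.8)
The plan is to treat (\ref{r21}) for what it is: an autonomous Riccati equation with constant coefficients, hence a separable first-order ODE, and to integrate it in closed form. Writing the right-hand side in the notation of (\ref{rln-p-q-r}) as $\tilde{r} + \tilde{p}\tilde{k} + \tilde{q}\tilde{k}^2$, the first task is to pin down the signs of the coefficients. From (\ref{moncondition}) one has $\tilde{q} = H_{11} > 0$, and since $\rho < \rho_0$ forces $\rho H_{22} - H_{33}H_{13}^2 > 0$ (using $H_{22} < 0$ together with the definition $\rho_0 = H_{22}^{-1}H_{33}H_{13}^2$), one gets $\tilde{r}(\rho) > 0$. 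Combined with the standing restriction $\rho < \rho_0 + \rho_*$, inequality (\ref{nontrivial-condition-delta}) yields $\tilde{p}^2 - 4\tilde{q}\tilde{r} < 0$, so the quadratic $\tilde{q}\tilde{k}^2 + \tilde{p}\tilde{k} + \tilde{r}$ has negative discriminant and, since $\tilde{q} > 0$, is strictly positive for all real $\tilde{k}$. This positivity is exactly what licenses both the separation of variables and the subsequent completion of the square.

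With positivity in hand, I would divide through by $\tilde{r}$, separate variables, and complete the square, turning the integrand into the standard form $\mathrm{d}\tilde{k}/\big[(\,\cdot\,)^2 + (1 - \tilde{p}^2/(4\tilde{q}\tilde{r}))\big]$ whose antiderivative is an arctangent. Imposing the terminal condition $\tilde{k}(T) = 0$ then fixes the constant of integration as $\arctan\big(\tilde{p}/\sqrt{4\tilde{q}\tilde{r} - \tilde{p}^2}\big)$, and inverting the arctangent produces the explicit tangent representation of $\tilde{k}(t)$ recorded in the statement. The blow-up time is finally read off as the value of $t$ at which the argument of the tangent first reaches a pole.

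The one genuinely non-routine point — the place where I expect to argue rather than merely compute — is identifying which pole of the tangent governs the blow-up, i.e. why the argument equals $-\pi/2$ and not $+\pi/2$ at $\tilde{t}_\rho$. The decisive observation is that positivity of the quadratic gives $\mathrm{d}\tilde{k}/\mathrm{d}t > 0$ everywhere, so $\tilde{k}$ is strictly increasing in $t$; starting from $\tilde{k}(T) = 0$ and running backward in time, $\tilde{k}$ decreases and must diverge to $-\infty$ at some $\tilde{t}_\rho < T$. Since the argument of the tangent at $t = T$ lies in the principal branch $(-\pi/2, \pi/2)$ and decreases as $t$ drops below $T$, the first pole it meets is $-\pi/2$, where the tangent (and hence $\tilde{k}$) tends to $-\infty$, matching the direction of divergence. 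Setting the argument equal to $-\pi/2$ then gives precisely (\ref{b-utk2}). The whole computation parallels that of Lemma~\ref{lem1}, with the sign of $\tilde{r}$ opposite to that of $r$ and the direction of blow-up reflected accordingly, which accounts for the $-\pi/2$ here versus the $+\pi/2$ there.
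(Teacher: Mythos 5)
Your proposal is correct and takes essentially the same route as the paper's own proof: separate variables in the autonomous Riccati equation, complete the square to reduce to an arctangent integral, fix the constant of integration via $\tilde{k}(T)=0$, and read the blow-up time off the pole of the resulting tangent at $-\frac{\pi}{2}$. Your additional verifications --- that $\tilde{q}=H_{11}>0$ and $\tilde{r}(\rho)>0$ force strict positivity of the quadratic, and that the monotonicity $\mathrm{d}\tilde{k}/\mathrm{d}t>0$ selects the pole $-\frac{\pi}{2}$ rather than $+\frac{\pi}{2}$ --- merely make explicit sign and branch choices that the paper asserts without comment.
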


\section{Proof of Theorem \ref{thm1}}\label{section3}
In this section, we  prove Theorem \ref{thm1}.
\begin{proof}[Proof of Theorem \ref{thm1}]
By \cite[Section 6, Proof of Theorem 3.2.]{peng},
the\ $n$-th eigenvalue $\lambda_n$ of
(\ref{eqn1}) is uniquely determined by
\begin{equation*}
t^{2n-1}_{\rho_n}=0,\indent \lambda_n=1-\rho_n,\indent  \forall n \in \dbN_+,
\end{equation*}
where
\begin{equation*}
t^{2j-1}_{\rho_n}=T-j(T-t_{\rho_n})-(j-1)(T-\tilde{t}_{\rho_n}),
  \indent j=1,2,\cdots n,
\end{equation*}
\begin{equation*}
t^{2j-2}_{\rho_n}=T-(j-1)(T-t_{\rho_n})-(j-1)(T-\tilde{t}_{\rho_n}),
  \indent t^1_{\rho_1}=t_\rho.
\end{equation*}
By Lemma \ref{lem1} and Lemma \ref{lem2}, we obtain
\begin{equation*}
\sqrt{rq- \frac{p^2}{4}}(T-t_\rho)+\arctan \frac{-p}{\sqrt{4rq-p^2}}
=\frac{\pi}{2}
\end{equation*}
and
\begin{equation*}
\sqrt{\tilde{q}\tilde{r}-\frac{\tilde{p}^2}{4}}(\tilde{t}_\rho-T)
+\arctan{\frac{\tilde{p}}{\sqrt{4\tilde{q}\tilde{r}-\tilde{p}^2}}}
=-\frac{\pi}{2}.
\end{equation*}
Then
$$\lim\limits_{n\to+\infty}\frac{T-t_{\rho_n}}{T-\tilde{t}_{\rho_n}} = 1$$
and
\begin{equation}\label{twoequationlambda}
\sqrt{rq- \frac{p^2}{4}}=\frac{\frac{\pi}{2}+\arctan{\frac{p}{\sqrt{4rq-p^2}}}}{T-t_\rho}
=\frac{\frac{\pi}{2}-\arctan{\frac{p}{\sqrt{4rq-p^2}}}}{T-\tilde{t}_\rho}.
\end{equation}
Besides, following the method in \cite[Subsection 6.2]{peng},
\begin{equation}\label{eq6}
T-n(T-t_{\rho_n})-(n-1)(T-\tilde{t}_{\rho_n})=0.
\end{equation}
Then
\begin{equation*}
  \max\left\{ T-t_{\rho_n}, T-\tilde{t}_{\rho_n} \right\} \ge \frac{T}{2n-1}
\end{equation*}
and for sufficiently large $n$,
\begin{equation*}
  \max\left\{ T-t_{\rho_n}, T-\tilde{t}_{\rho_n} \right\} \le \frac{T}{2n-2}.
\end{equation*}
Then by \eqref{twoequationlambda}, for sufficiently large $n$,
\begin{equation*}
 \frac{(n-1)\pi}{T} \le \sqrt{rq(\rho_n)- \frac{p^2}{4}} \le \frac{(2n-1)\pi}{T}.
\end{equation*}
Moreover, by \eqref{rln-p-q-r},
\begin{equation*}
rq(\rho_n)-\frac{p^2}{4}=\left(-H_{11}H_{22}\right)\lambda_n
+H_{11}H_{22}-H_{11}H_{33}H_{13}^2-\frac{(2H_{21}+H_{13}^2)^2}{4}.
\end{equation*}
Then
\begin{equation*}
\frac{\pi^2}{-2H_{11}H_{22}T^2} \le \mathop{\underline{\lim}}_{n\to+\infty} \frac{\lambda_n}{n^2}\leq \mathop{\overline{\lim}}_{n\to+\infty}  \frac{\lambda_n}{n^2}\le \frac{4\pi^2}{-H_{11}H_{22}T^2}.
\end{equation*}
\end{proof}





\end{document}